\newtheorem{theorem}{Theorem}[section]
\newtheorem{lemma}[theorem]{Lemma}
\newtheorem{corollary}[theorem]{Corollary}
\theoremstyle{definition}
\newtheorem{definition}[theorem]{Definition}
\newtheorem{example}[theorem]{Example}
\theoremstyle{remark}
\newtheorem{remark}[theorem]{Remark}
\begin{document}

\title{Composition Functionals in Fractional\\
Calculus of Variations}

% ------------------

\author{Agnieszka B. Malinowska\\
        \texttt{abmalinowska@ua.pt}\\[4mm]
       {\sl Faculty of Computer Science}\\
       {\sl Bia{\l}ystok University of Technology}\\
       {\sl 15-351 Bia{\l}ystok, Poland}
       \and
       Moulay Rchid Sidi Ammi\\
       \texttt{sidiammi@ua.pt}\\[4mm]
       {\sl Department of Mathematics}\\
       {\sl Moulay Ismail University}\\
       {\sl B.P. 509 Errachidia, Morocco}
       \and
       Delfim F. M. Torres\footnote{Corresponding author.}\\
       \texttt{delfim@ua.pt}\\[4mm]
       {\sl Department of Mathematics}\\
       {\sl University of Aveiro}\\
       {\sl 3810-193 Aveiro, Portugal}}

\date{}

\maketitle

% -----------------------------------------------

\begin{abstract}
We prove Euler--Lagrange and natural boundary necessary optimality
conditions for fractional problems of the calculus of variations
which are given by a composition of functionals. Our approach uses
the recent notions of Riemann--Liouville fractional derivatives and
integrals in the sense of Jumarie. As an application, we get
optimality conditions for the product and the quotient of
fractional variational functionals.
\end{abstract}

% ------------------

\begin{keyword}
Fractional calculus of variations;
Composition of functionals;
Fractional Euler--Lagrange equations;\\
Fractional natural boundary conditions;
Modified Riemann--Liouville derivative and integral.

\medskip

\textbf{MSC 2010:} 26A33; 49K05.

\end{keyword}

% ------------------

\setcounter{page}{1}

%---------------------------------------------------------------------

\section{Introduction}

The present work is dedicated to the study of general
(non-classical) fractional problems of calculus of variations.
As a particular case, when $\alpha \rightarrow 1$, one gets the
generalized calculus of variations \cite{CLP} with functionals of the form
\begin{equation*}
H \left(\int_{a}^{b}f(t,x(t),x'(t))dt \right),
\end{equation*}
where $f$ has $n$ components and $H$ has $n$ independent variables.
Problems of calculus of variations as these appear in practical
applications (see \cite{CLP,maltorp,maltord} and the references
given therein) but cannot be solved using the classical theory.
Therefore, an extension of this theory is needed.

The fractional calculus of variations started in 1996 with the work
of Riewe \cite{rie}. Riewe formulated the problem of the calculus of
variations with fractional derivatives and obtained the respective
Euler--Lagrange equations, combining both conservative and
nonconservative cases. Nowadays the fractional calculus of
variations is a subject under strong research. Different definitions
for fractional derivatives and integrals are used, depending on the
purpose under study. Investigations cover problems depending on
Riemann--Liouville fractional derivatives (see, \textrm{e.g.},
\cite{Atanackovic,Frederico:Torres1,Frederico:Torres:NODY,withTatiana:Spain2010}),
the Caputo fractional derivative (see, \textrm{e.g.},
\cite{AGRA,Baleanu1,MalTor}), the symmetric fractional derivative
(see, \textrm{e.g.}, \cite{Klimek,withBasia:Spain2010}),
the Jumarie fractional derivative (see, \textrm{e.g.},
\cite{al:ma:tor,Almeida,Jumarie1,Jumarie2,Jumarie4,Jumarie3,Jumarie5,16}), and others
\cite{Ric:Del:09,Cresson:Gasta:Delfim,El-Nabulsi:Torres:2007,El-Nabulsi:Torres:2008,Frederico:Torres2010}.
For applications of the fractional calculus of variations we refer the
reader to \cite{al:ma:tor,Dreisigmeyer1,Dreisigmeyer2,El-Nabulsi:Torres:2008,Jumarie4,Klimek,Rabei2,Rabei1,Stanislavskya}.
Here we use the fractional calculus proposed by Jumarie. This
modified Riemann--Liouville calculus has shown recently to be very useful in the
fractional calculus of variations for multiple integrals \cite{al:ma:tor},
and provides an efficient tool to solve fractional differential equations \cite{16}.

The paper is organized as follows. In Section~\ref{sec:prm} we present some
preliminaries on the fractional calculus proposed by Jumarie. Our
results are then given in Section~\ref{sec:Euler}. We begin
Section~\ref{sec:Euler} by formulating the general (non-classical)
fractional problem of calculus of variations \eqref{vp}. The problem
is defined via the fractional derivative and the fractional integral
in the sense of Jumarie. We obtain Euler--Lagrange equations
and natural boundary conditions for the general problem (Theorem~\ref{thm:mr}),
which are then applied to the product (Corollary~\ref{cproduct})
and the quotient (Corollary~\ref{cquotient}).
In Section~{\ref{sec:ex}} we provide an example illustrating our results.

%---------------------------------------------------------------------

\section{Preliminaries}
\label{sec:prm}

For an introduction to the classical fractional calculus we refer
the reader to \cite{Kilbas,Podlubny,samko}. In this section we
briefly review the main notions and results from the recent
fractional calculus proposed by Jumarie
\cite{Jumarie1,Jumarie2,Jumarie4,Jumarie3}.

\begin{definition}
Let $f:[a,b]\to\mathbb R$ be a continuous (but not necessarily
differentiable) function. The Jumarie fractional derivative of $f$
is defined by
\begin{equation*}
f^{(\alpha)}(t)
:=\frac{1}{\Gamma(-\alpha)}\int_0^t(t-\tau)^{-\alpha-1}(f(\tau)-f(a))\,d\tau,
\quad \alpha<0,
\end{equation*}
where $\Gamma(z)=\int_0^\infty t^{z-1}e^{-t}\, dt$.
For a positive $\alpha$,
\begin{equation*}
f^{(\alpha)}(t)=(f^{(\alpha-1)}(t))'
=\frac{1}{\Gamma(1-\alpha)}\frac{d}{dt}\int_0^t(t-\tau)^{-\alpha}(f(\tau)-f(a))\,d\tau,
\end{equation*}
in the case $0<\alpha<1$, and
\begin{equation*}
f^{(\alpha)}(t):=(f^{(\alpha -n)}(t))^{(n)}, \quad n\leq\alpha<n+1,
\quad n\geq 1.
\end{equation*}
\end{definition}

The Jumarie fractional derivative has the following properties:
\begin{itemize}
\item[(i)] The $\alpha$th derivative of a constant is zero.
\item[(ii)] If $0<\alpha\leq 1$, then the Laplace transform
of $f^{(\alpha)}$ is given by
\begin{equation*}
\mathfrak{L}\{f^{(\alpha)}(t)\}=s^{\alpha}\mathfrak{L}\{f(t)\}-s^{\alpha-1}f(0).
\end{equation*}
\item[(iii)]
$(g(t)f(t))^{(\alpha)}=g^{(\alpha)}(t)f(t)+g(t)f^{(\alpha)}(t)$, $0<\alpha<1$.
\end{itemize}

\begin{example}
Let $f(t)=t^\gamma$, $\gamma>0$, and $0<\alpha<1$. Then
$f^{(\alpha)}(x)=\Gamma(\gamma+1)\Gamma^{-1}(\gamma+1-\alpha)t^{\gamma-\alpha}$.
\end{example}

\begin{example}
Let $c$ and $x_0$ be given constants.
The solution of the fractional differential equation
$$
x^{(\alpha)}(t)=c, \quad x(0)=x_0,
$$
is given by
$$
x(t)=\frac{c}{\alpha!}t^{\alpha}+x_0,
$$
where ${\alpha!}:=\Gamma(1+\alpha)$.
\end{example}

The integral with respect to $(dt)^\alpha$ is defined as the
solution of the fractional differential equation
\begin{equation}
\label{int}
dy=f(x)(dx)^{\alpha},\quad x\geq0,\quad y(0)=y_0,
\quad 0<\alpha\leq 1 .
\end{equation}
Such solution is provided by the following result:
\begin{lemma}
\label{integral}
Let $f$ denote a continuous function. The solution of the
equation \eqref{int} is
$$
\int_0^tf(\tau)(d\tau)^\alpha :=
\alpha\int_0^t(t-\tau)^{\alpha-1}f(\tau)d\tau,
\quad 0<\alpha\leq 1.
$$
\end{lemma}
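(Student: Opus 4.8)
The integral on the right-hand side is, by \eqref{int}, nothing but the solution $y$ (normalised so that $y(0)=y_{0}$) of the fractional differential equation $dy=f(x)(dx)^{\alpha}$; hence the plan is to (a) rewrite this differential equation as an equation for the Jumarie derivative $y^{(\alpha)}$, and then (b) solve that equation explicitly. For step (a) I would invoke the first-order Jumarie fractional Taylor expansion $y(x+dx)-y(x)=\frac{1}{\Gamma(1+\alpha)}y^{(\alpha)}(x)(dx)^{\alpha}+o((dx)^{\alpha})$, which identifies the formal differential as $dy=\frac{1}{\Gamma(1+\alpha)}y^{(\alpha)}(x)(dx)^{\alpha}$. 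Comparing with \eqref{int} shows that the equation is equivalent to
\begin{equation*}
y^{(\alpha)}(x)=\Gamma(1+\alpha)\,f(x),\qquad y(0)=y_{0}.
\end{equation*}
As a sanity check, for $f\equiv c/\alpha!$ this reads $y^{(\alpha)}=c$, whose solution $y=\frac{c}{\alpha!}x^{\alpha}+y_{0}$ is exactly the one recorded in the Example above, and one checks that $\alpha\int_{0}^{x}(x-\tau)^{\alpha-1}\frac{c}{\alpha!}\,d\tau=\frac{c}{\alpha!}x^{\alpha}$, as it must be.

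For step (b) the cleanest route uses the Laplace transform, which property (ii) supplies for $0<\alpha\le 1$. Applying $\mathfrak{L}$ to the equation above and using property (ii) with $y(0)=y_{0}$ yields
\begin{equation*}
s^{\alpha}\mathfrak{L}\{y(x)\}-s^{\alpha-1}y_{0}=\Gamma(1+\alpha)\,\mathfrak{L}\{f(x)\},
\end{equation*}
so that $\mathfrak{L}\{y\}=\Gamma(1+\alpha)s^{-\alpha}\mathfrak{L}\{f\}+y_{0}s^{-1}$. The term $y_{0}s^{-1}$ inverts to the constant $y_{0}$, recovering the initial value. For the first term I would recall $\mathfrak{L}\{x^{\alpha-1}\}=\Gamma(\alpha)s^{-\alpha}$, hence $\mathfrak{L}\{x^{\alpha-1}/\Gamma(\alpha)\}=s^{-\alpha}$, and apply the convolution theorem to obtain
\begin{equation*}
\mathfrak{L}^{-1}\left\{s^{-\alpha}\mathfrak{L}\{f\}\right\}=\frac{1}{\Gamma(\alpha)}\int_{0}^{x}(x-\tau)^{\alpha-1}f(\tau)\,d\tau.
\end{equation*}
Multiplying by $\Gamma(1+\alpha)$ and using the identity $\Gamma(1+\alpha)/\Gamma(\alpha)=\alpha$ then gives $y(x)-y_{0}=\alpha\int_{0}^{x}(x-\tau)^{\alpha-1}f(\tau)\,d\tau$, which (after renaming the variable) is the claimed formula.

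The step I expect to be the only genuine obstacle is (a): pinning down the exact normalisation constant relating the formal differential $dy$ to $y^{(\alpha)}$, and checking that the boundary term $s^{\alpha-1}y_{0}$ produced by property (ii) cancels so that $y(0)=y_{0}$ is met. Everything afterward is bookkeeping with the Gamma function. As an alternative to the Laplace computation, one may verify the formula directly: writing $\alpha\int_{0}^{x}(x-\tau)^{\alpha-1}f(\tau)\,d\tau=\Gamma(1+\alpha)\,I^{\alpha}f(x)$ with $I^{\alpha}$ the Riemann--Liouville integral, and using that the Jumarie derivative satisfies $y^{(\alpha)}=D^{\alpha}[\,y-y(0)\,]$ together with the left-inverse property $D^{\alpha}I^{\alpha}f=f$, one recovers $y^{(\alpha)}=\Gamma(1+\alpha)\,f$ and $y(0)=y_{0}$ at once, confirming that the proposed integral solves \eqref{int}.
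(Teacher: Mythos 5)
Your proposal is correct, but note that the paper itself offers no proof of Lemma~\ref{integral} at all: it is stated in the Preliminaries as a quoted result from Jumarie's calculus \cite{Jumarie1,Jumarie2,Jumarie3}, so there is no internal argument to compare against. Your reconstruction is essentially Jumarie's own derivation and is consistent with everything the paper does state: the identification $dy=\frac{1}{\alpha!}y^{(\alpha)}(x)(dx)^{\alpha}$ via the fractional Taylor expansion is the only way the formal equation \eqref{int} acquires meaning (so your step (a) is less an ``obstacle'' than the definitional content of the lemma), and the resulting equation $y^{(\alpha)}=\Gamma(1+\alpha)f$ reproduces the paper's two examples exactly, as your sanity checks confirm. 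Two small caveats on step (b): the Laplace route silently assumes $f$ is Laplace-transformable (of exponential order), which is stronger than the stated hypothesis of mere continuity, and property (ii) is itself quoted without proof, so that route rests on another unproved ingredient. Your closing alternative --- writing $y=y_{0}+\Gamma(1+\alpha)I^{\alpha}f$ with $I^{\alpha}$ the Riemann--Liouville integral, noting $y^{(\alpha)}=D^{\alpha}[y-y(0)]$ directly from the paper's Definition of the Jumarie derivative (with $a=0$), and invoking $D^{\alpha}I^{\alpha}f=f$ together with $I^{\alpha}f(0)=0$ for continuous $f$ --- is the cleaner argument: it needs only the stated hypotheses, verifies the initial condition at once, and I would make it the primary proof rather than the afterthought, demoting the Laplace computation to the heuristic that locates the constant $\Gamma(1+\alpha)/\Gamma(\alpha)=\alpha$.
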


\begin{example}
Let $f(t) \equiv 1$, and $0<\alpha\leq 1$.
Then, $\int_0^t(d\tau)^\alpha=t^{\alpha}$.
\end{example}

\begin{example}
The solution of the fractional differential equation
$x^{(\alpha)}(t)=f(t)$, $x(0)=x_0$, is
$$
x(t)=x_0 +\Gamma^{-1}(\alpha)\int_0^t(t-\tau)^{\alpha-1}f(\tau)d\tau.
$$
\end{example}

In the discussion to follow, we will need the following formula of
integration by parts:
\begin{equation}
\label{int:parts}
\int_a^bu^{(\alpha)}(t)v(t)\, (dt)^\alpha =\alpha! [u(t)v(t)]_a^b
-\int_a^bu(t)v^{(\alpha)}(t)\, (dt)^\alpha,
\end{equation}
where $\alpha!:=\Gamma(1+\alpha)$.

%----------------------------------------------------------------

\section{Main Results}
\label{sec:Euler}

The general (non-classical) problem of the fractional calculus of
variations under our consideration consists of extremizing
(\textrm{i.e.}, minimizing or maximizing)
\begin{equation}
\label{vp}
\begin{gathered}
\mathcal{L}[x]=H\left(\int_{a}^{b}f_{1}(t,x(t),x^{(\alpha_1)}(t))(dt)^{\alpha_1},
\ldots, \int_{a}^{b}f_{n}((t,x(t),x^{(\alpha_n)}(t))(dt)^{\alpha_n}\right)\\
(x(a)=x_{a}) \quad (x(b)=x_{b})
\end{gathered}
\end{equation}
over all $x\in \mathbf{D}$ with
$$
\mathbf{D}:=\{x\in C^0 :x^{({\alpha_i})},i=1,\ldots,n,
\mbox{ exists and is continuous on the interval}[a,b]\}.
$$
Using parentheses around the end-point conditions
means that these conditions may or may not be present.
We assume that:
\begin{itemize}

\item[(i)] the function $H:\mathbb{R}^{n}\rightarrow \mathbb{R}$ has continuous
partial derivatives with respect to its arguments and we denote them
by $H'_{i}$, $i=1,\ldots,n$;

\item[(ii)] functions $(t,y,v)\rightarrow f_{i}(t,y,v)$ from $[a,b]\times \mathbb{R}^{2}$ to
$\mathbb{R}$, $i=1,\ldots,n$, have partial continuous derivatives
with respect to $y,v$ for all $t\in[a,b]$ and we denote them by
$f_{iy}$, $f_{iv}$;

\item [(iii)] $f_{i}$, $i=1,\ldots,n$, and their partial
derivatives are continuous in $t$ for all $x\in \mathbf{D} $.
\end{itemize}

A function $x\in \mathbf{D}$ is said to be an admissible function
provided that it satisfies the end-points conditions (if any is
given). The following norm in $\mathbf{D}$ is considered:
\begin{equation*}
\|x\|=\max_{t\in[a,b]}|x(t)|+\sum_{i=1}^n\max_{t\in[a,b]}|x^{(\alpha_i)}(t)|.
\end{equation*}

\begin{definition}
An admissible function $\tilde{x}$ is said to be a
\emph{weak local minimizer} (resp. \emph{weak local maximizer})
for \eqref{vp} if there exists $\delta >0$ such that
$\mathcal{L}[\tilde{x}]\leq \mathcal{L}[x]$
(resp. $\mathcal{L}[\tilde{x}] \geq \mathcal{L}[x]$)
for all admissible $x$ with $\|x-\tilde{x}\|<\delta$.
\end{definition}

For simplicity of notation we introduce the operator
$\langle x \rangle_i$, $i=1,\ldots,n$, defined by
$$
\langle x \rangle_i(t)=(t,x(t),x^{(\alpha_i)}(t)).
$$
Then,
$$
\mathcal{L}[x]=H\left(\int_{a}^{b}f_{1}\langle x \rangle_1(t)(dt)^{\alpha_1},
\ldots, \int_{a}^{b}f_{n}\langle x\rangle_n(t)(dt)^{\alpha_n}\right).
$$

The next theorem gives necessary optimality conditions for problem \eqref{vp}.

\begin{theorem}
\label{thm:mr}
If $\tilde{x}$ is a weak local solution to problem
\eqref{vp}, then the Euler--Lagrange equation
\begin{equation*}
\sum_{i=1}^{n}\alpha
_{i}H'_{i}(\mathcal{F}_{1}[\tilde{x}],\ldots,
\mathcal{F}_{n}[\tilde{x}])(b-t)^{\alpha_i-1}\left(f_{iy}\langle
\tilde{x} \rangle_i(t)- f_{iv}^{(\alpha_i)}\langle \tilde{x}
\rangle_i(t)\right)=0
\end{equation*}
holds for all $t \in [a,b)$, where
$\mathcal{F}_{i}[\tilde{x}]=\int_{a}^{b}f_{i}\langle \tilde{x}
\rangle_i(t)(dt)^{\alpha_i}$, $i=1,\ldots,n$.
Moreover, if $x(a)$ is not specified, then
\begin{equation}
\label{nat:l} \sum_{i=1}^{n}\alpha_{i}!
H'_{i}(\mathcal{F}_{1}[\tilde{x}],\ldots, \mathcal{F}_{n}[\tilde{x}])
f_{iv}\langle \tilde{x} \rangle_i(a)=0 \, ;
\end{equation}
if $x(b)$ is not specified, then
\begin{equation}
\label{nat:r}
\sum_{i=1}^{n}\alpha_{i}! H'_{i}(\mathcal{F}_{1}[\tilde{x}],\ldots,
\mathcal{F}_{n}[\tilde{x}])f_{iv}\langle \tilde{x} \rangle_i(b)=0.
\end{equation}
\end{theorem}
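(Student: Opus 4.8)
The plan is to adapt the classical first-variation argument to the Jumarie fractional setting. First I would embed the candidate extremizer in a one-parameter family of admissible functions $x = \tilde{x} + \varepsilon\eta$, where $\eta \in \mathbf{D}$ is an arbitrary variation and $\varepsilon \in \mathbb{R}$. Since the Jumarie derivative is linear, $(\tilde{x}+\varepsilon\eta)^{(\alpha_i)} = \tilde{x}^{(\alpha_i)} + \varepsilon\eta^{(\alpha_i)}$, so each inner integral $\mathcal{F}_i[\tilde{x}+\varepsilon\eta]$ is a smooth function of $\varepsilon$, and so is $\phi(\varepsilon) := \mathcal{L}[\tilde{x}+\varepsilon\eta]$. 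Because $\tilde{x}$ is a weak local extremizer, $\phi$ attains a local extremum at $\varepsilon = 0$, whence $\phi'(0) = 0$.

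Next I would compute $\phi'(0)$ by the chain rule applied to $H$:
\[
\phi'(0) = \sum_{i=1}^n H'_i(\mathcal{F}_1[\tilde{x}],\ldots,\mathcal{F}_n[\tilde{x}])\,\frac{d}{d\varepsilon}\mathcal{F}_i[\tilde{x}+\varepsilon\eta]\Big|_{\varepsilon=0}.
\]
To evaluate each derivative I would rewrite the fractional integral as an ordinary one using Lemma~\ref{integral}, namely $\mathcal{F}_i[x] = \alpha_i\int_a^b(b-t)^{\alpha_i-1}f_i\langle x\rangle_i(t)\,dt$, and then differentiate under the integral sign, which the continuity hypotheses (i)--(iii) justify. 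This produces
\[
\frac{d}{d\varepsilon}\mathcal{F}_i[\tilde{x}+\varepsilon\eta]\Big|_{\varepsilon=0} = \int_a^b\left(f_{iy}\langle\tilde{x}\rangle_i(t)\,\eta(t) + f_{iv}\langle\tilde{x}\rangle_i(t)\,\eta^{(\alpha_i)}(t)\right)(dt)^{\alpha_i}.
\]

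The crucial step is to eliminate the dependence on $\eta^{(\alpha_i)}$ via the fractional integration-by-parts formula \eqref{int:parts}, taking $u=\eta$ and $v=f_{iv}\langle\tilde{x}\rangle_i$, so that
\[
\int_a^b f_{iv}\langle\tilde{x}\rangle_i(t)\,\eta^{(\alpha_i)}(t)\,(dt)^{\alpha_i} = \alpha_i!\left[f_{iv}\langle\tilde{x}\rangle_i(t)\,\eta(t)\right]_a^b - \int_a^b f_{iv}^{(\alpha_i)}\langle\tilde{x}\rangle_i(t)\,\eta(t)\,(dt)^{\alpha_i}.
\]
Substituting this back, writing $H'_i$ for $H'_i(\mathcal{F}_1[\tilde{x}],\ldots,\mathcal{F}_n[\tilde{x}])$, and reconverting the surviving fractional integrals to ordinary ones (again by Lemma~\ref{integral}, which reinstates the weight $(b-t)^{\alpha_i-1}$), the condition $\phi'(0)=0$ takes the form
\[
\sum_{i=1}^n\alpha_i H'_i\int_a^b(b-t)^{\alpha_i-1}\eta(t)\left(f_{iy}\langle\tilde{x}\rangle_i(t)-f_{iv}^{(\alpha_i)}\langle\tilde{x}\rangle_i(t)\right)dt + \sum_{i=1}^n\alpha_i! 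H'_i\left[f_{iv}\langle\tilde{x}\rangle_i(t)\,\eta(t)\right]_a^b = 0.
\]

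Finally I would separate the interior equation from the boundary contributions. Restricting first to variations with $\eta(a)=\eta(b)=0$ annihilates both boundary terms, and the fundamental lemma of the calculus of variations applied to the resulting integral yields the Euler--Lagrange equation; the singularity of $(b-t)^{\alpha_i-1}$ at $t=b$ is precisely what confines this conclusion to $t\in[a,b)$. With the Euler--Lagrange equation in hand, the integral vanishes for every $\eta$, so for general variations only the boundary sum remains: choosing $\eta(b)=0$ with $\eta(a)$ free isolates \eqref{nat:l}, while $\eta(a)=0$ with $\eta(b)$ free isolates \eqref{nat:r}. The main obstacle is the bookkeeping in passing back and forth between the $(dt)^{\alpha_i}$ integrals and ordinary integrals, so that the weights $(b-t)^{\alpha_i-1}$ and the constants $\alpha_i$ and $\alpha_i!$ land in exactly the right places; the singular weight at the right endpoint is also what forces the Euler--Lagrange equation to hold only on $[a,b)$ rather than on the closed interval.
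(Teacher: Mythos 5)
Your proposal is correct and follows essentially the same route as the paper's own proof: the one-parameter family $\tilde{x}+\varepsilon\eta$, the chain rule for $H$, the fractional integration by parts \eqref{int:parts}, the conversion to ordinary integrals via Lemma~\ref{integral} introducing the weight $(b-t)^{\alpha_i-1}$, the Dubois--Reymond lemma for variations vanishing at both ends, and then the arbitrary boundary values $\eta(a)$, $\eta(b)$ to isolate \eqref{nat:l} and \eqref{nat:r}. The only cosmetic difference is that you rewrite $\mathcal{F}_i$ as an ordinary integral before differentiating under the integral sign, whereas the paper differentiates the composite functional directly; this does not change the argument.
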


\begin{proof}
Suppose that $\mathcal{L}[x]$ has a weak local extremum at
$\tilde{x}$. For an admissible variation $h\in \mathbf{D}$ we define
a function $\phi:\mathbb{R}\rightarrow \mathbb{R}$ by
$\phi(\varepsilon) = \mathcal{L}[(\tilde{x} + \varepsilon h)] $. We
do not require $h(a)=0$ or $h(b)=0$ in case $x(a)$ or $x(b)$,
respectively, is free (it is possible that both are free). A
necessary condition for $\tilde{x}$ to be an extremizer for
$\mathcal{L}[x]$ is given by
$\phi'(\varepsilon)|_{\varepsilon=0} = 0$.
Using the chain rule to obtain the derivative
of a composed function, we get
\begin{equation*}
\phi'(\varepsilon)|_{\varepsilon=0}
=\sum_{i=1}^{n}H'_{i}(\mathcal{F}_{1}[\tilde{x}],\ldots,\mathcal{F}_{n}[\tilde{x}])
\int_a^b \left[ f_{iy}\langle \tilde{x} \rangle_i(t) h(t) +
f_{iv}\langle \tilde{x} \rangle_i(t)
h^{(\alpha_i)}(t)\right](dt)^{\alpha_i}.
\end{equation*}
Integration by parts (see equation \eqref{int:parts})
of the second term of the integrands, gives
\begin{equation*}
\int_a^b f_{iv}\langle \tilde{x} \rangle_i(t)
h^{(\alpha_i)}(t)(dt)^{\alpha_i} =\left[\alpha_i!f_{iv}\langle
\tilde{x} \rangle_i(t)
h(t)\right]_{t=a}^{t=b}-\int_a^bf_{iv}^{(\alpha_i)}\langle \tilde{x}
\rangle_i(t) h(t)(dt)^{\alpha_i}.
\end{equation*}
The necessary condition $\phi'(\varepsilon)|_{\varepsilon=0} = 0$
can be written as
\begin{multline*}
0 = \sum_{i=1}^{n}H'_{i}(\mathcal{F}_{1}[\tilde{x}],\ldots,\mathcal{F}_{n}[\tilde{x}])
\int_a^b\left(f_{iy}\langle \tilde{x} \rangle_i(t)-f_{iv}^{(\alpha_i)}\langle \tilde{x}
\rangle_i(t)\right)h(t)(dt)^{\alpha_i} \\
+\sum_{i=1}^{n}H'_{i}(\mathcal{F}_{1}[\tilde{x}],\ldots,\mathcal{F}_{n}[\tilde{x}])
\left[\alpha_i!f_{iv}\langle \tilde{x} \rangle_i(t) h(t)\right]_{t=a}^{t=b}.
\end{multline*}
Taking into account Lemma~\ref{integral}, we have
\begin{multline}
\label{eq:aft:IP}
0 = \int_a^b\sum_{i=1}^{n}\alpha_iH'_{i}(\mathcal{F}_{1}[\tilde{x}],
\ldots,\mathcal{F}_{n}[\tilde{x}])(b-t)^{\alpha_i-1}
\left(f_{iy}\langle \tilde{x} \rangle_i(t)
-f_{iv}^{(\alpha_i)}\langle \tilde{x}
\rangle_i(t)\right)h(t) dt\\
+\sum_{i=1}^{n}H'_{i}(\mathcal{F}_{1}[\tilde{x}],\ldots,\mathcal{F}_{n}[\tilde{x}])
\left[\alpha_i!f_{iv}\langle \tilde{x} \rangle_i(t) h(t)\right]_{t=a}^{t=b}.
\end{multline}
In particular, equation \eqref{eq:aft:IP} holds for all variations which are zero at both ends.
For all such $h$'s, the second term in \eqref{eq:aft:IP} is zero and by the
Dubois-Reymond Lemma (see, \textrm{e.g.}, \cite{Brunt}), we have that
\begin{equation}
\label{eq:EL}
\sum_{i=1}^{n}\alpha_{i}H'_{i}(\mathcal{F}_{1}[\tilde{x}],\ldots,
\mathcal{F}_{n}[\tilde{x}])(b-t)^{\alpha_i-1}\left(f_{iy}\langle
\tilde{x} \rangle_i(t)- f_{iv}^{(\alpha_i)}\langle \tilde{x}
\rangle_i(t)\right)=0
\end{equation}
holds for all $t \in [a,b)$. Equation \eqref{eq:aft:IP} must be
satisfied for all admissible values of $h(a)$ and $h(b)$.
Consequently, equations \eqref{eq:aft:IP} and \eqref{eq:EL} imply that
\begin{equation}
\label{eq:1}
0=\sum_{i=1}^{n}H'_{i}(\mathcal{F}_{1}[\tilde{x}],\ldots,\mathcal{F}_{n}[\tilde{x}])
\alpha_i!f_{iv}\langle \tilde{x} \rangle_i(t) h(b)
-\sum_{i=1}^{n}H'_{i}(\mathcal{F}_{1}[\tilde{x}],\ldots,\mathcal{F}_{n}[\tilde{x}])
\alpha_i!f_{iv}\langle \tilde{x} \rangle_i(t) h(a).
\end{equation}
If $x$ is not preassigned at either end-point, then $h(a)$ and
$h(b)$ are both completely arbitrary and we conclude that their
coefficients in \eqref{eq:1} must each vanish. It follows that
condition \eqref{nat:l} holds when $x(a)$ is not given, and
condition \eqref{nat:r} holds when $x(b)$ is not given.
\end{proof}

Note that in the limit, when $\alpha_i\rightarrow 1$,
$i=1,\ldots,n$, Theorem~\ref{thm:mr} implies the following result:

\begin{corollary}[Th.~3.1 and Eq.~(4.1) in \cite{CLP}]
If $\tilde{x}$ is a solution to problem
\begin{equation*}
\begin{gathered}
\mathcal{L}[x]=H\left(\int_{a}^{b}f_{1}(t,x(t),x'(t))dt,\ldots,
\int_{a}^{b}f_{n}(t,x(t),x'(t))dt\right) \longrightarrow \textrm{extr},\\
(x(a)=x_{a}) \quad (x(b)=x_{b})
\end{gathered}
\end{equation*}
then the Euler--Lagrange equation
\begin{equation*}
\sum_{i=1}^{n}H'_{i}(\mathcal{F}_{1}[\tilde{x}],
\ldots,\mathcal{F}_{n}[\tilde{x}])\left(f_{iy}(t,\tilde{x}(t),\tilde{x}'(t))
-\frac{d}{dx}f_{iv}(t,\tilde{x}(t),\tilde{x}'(t))\right)=0
\end{equation*}
holds for all $t \in [a,b]$, where
$\mathcal{F}_{i}[\tilde{x}]=\int_{a}^{b}f_{i}(t,\tilde{x}(t),\tilde{x}'(t))dt$,
$i=1,\ldots,n$. Moreover, if $x(a)$ is not specified, then
\begin{equation*}
\sum_{i=1}^{n}H'_{i}(\mathcal{F}_{1}[\tilde{x}],\ldots,
\mathcal{F}_{n}[\tilde{x}])f_{iv}(a,\tilde{x}(a),\tilde{x}'(a))=0;
\end{equation*}
if $x(b)$ is not specified, then
\begin{equation*}
\sum_{i=1}^{n}H'_{i}(\mathcal{F}_{1}[\tilde{x}],\ldots,
\mathcal{F}_{n}[\tilde{x}])f_{iv}(b,\tilde{x}(b),\tilde{x}'(b))=0.
\end{equation*}
\end{corollary}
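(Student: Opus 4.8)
The strategy is to obtain the corollary simply by passing to the limit $\alpha_i \to 1$, for every $i = 1,\dots,n$, in the three conclusions of Theorem~\ref{thm:mr}. The plan is first to identify the limit of each fractional object that appears there, and then to substitute these limits into the Euler--Lagrange equation \eqref{eq:EL} and into the natural boundary conditions \eqref{nat:l} and \eqref{nat:r}.

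First I would record the relevant limits. As $\alpha \to 1$ the Jumarie fractional derivative collapses to the ordinary derivative, $f^{(\alpha)} \to f'$; this is already visible in the computation $f^{(\alpha)}(t) = \Gamma(\gamma+1)\Gamma^{-1}(\gamma+1-\alpha)\,t^{\gamma-\alpha} \to \gamma\, t^{\gamma-1}$ for $f(t)=t^{\gamma}$. Consequently $x^{(\alpha_i)} \to x'$, the operator $\langle \tilde x\rangle_i(t)$ tends to $(t,\tilde x(t),\tilde x'(t))$, and $f_{iv}^{(\alpha_i)}\langle \tilde x\rangle_i(t) \to \frac{d}{dt} f_{iv}(t,\tilde x(t),\tilde x'(t))$. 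For the integrals, Lemma~\ref{integral} gives $\int_a^b g(t)(dt)^{\alpha_i} = \alpha_i \int_a^b (b-t)^{\alpha_i-1} g(t)\,dt$, which tends to $\int_a^b g(t)\,dt$ as $\alpha_i \to 1$; hence $\mathcal F_i[\tilde x] \to \int_a^b f_i(t,\tilde x(t),\tilde x'(t))\,dt$. Finally the scalar weights behave as $\alpha_i \to 1$, $\alpha_i! = \Gamma(1+\alpha_i) \to \Gamma(2) = 1$, and $(b-t)^{\alpha_i-1} \to 1$ for each fixed $t \in [a,b)$.

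I would then substitute these limits into \eqref{eq:EL}: this removes the weight $\alpha_i(b-t)^{\alpha_i-1}$ and replaces every fractional quantity by its classical counterpart, producing $\sum_{i=1}^n H'_i(\mathcal F_1[\tilde x],\dots,\mathcal F_n[\tilde x])\big(f_{iy}-\tfrac{d}{dt}f_{iv}\big)=0$; since the singular factor $(b-t)^{\alpha_i-1}$ has disappeared and the resulting expression is continuous in $t$, the range $t\in[a,b)$ of the theorem extends by continuity to the closed interval $[a,b]$. In the same way, inserting $\alpha_i!\to 1$ into \eqref{nat:l} and \eqref{nat:r} yields precisely the stated natural boundary conditions. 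The step I expect to be the real obstacle is not this algebra but the analytic justification of the two limiting passages $f^{(\alpha)}\to f'$ and $\int(dt)^{\alpha}\to\int dt$: one must verify, under hypotheses (i)--(iii) and the assumed regularity of $\tilde x$, that these convergences are uniform enough to commute with the integration defining $\mathcal F_i$ and with the pointwise evaluation in the Euler--Lagrange equation, so that the limiting relations are genuinely the $\alpha_i\to1$ limits of \eqref{eq:EL}, \eqref{nat:l} and \eqref{nat:r}. Granting this standard regularity, the corollary follows.
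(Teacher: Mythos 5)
Your proposal is correct and coincides with the paper's own treatment: the corollary is presented there as the immediate $\alpha_i\rightarrow 1$ limit of Theorem~\ref{thm:mr}, with $\alpha_i!=\Gamma(1+\alpha_i)\rightarrow 1$, $(b-t)^{\alpha_i-1}\rightarrow 1$, $x^{(\alpha_i)}\rightarrow x'$, and $\int_a^b(\cdot)(dt)^{\alpha_i}\rightarrow\int_a^b(\cdot)\,dt$, exactly as you identify. If anything, your explicit flagging of the interchange-of-limits justification is more careful than the paper, which states the corollary without further argument.
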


\begin{corollary}
\label{cproduct}
If $\tilde{x}$ is a solution to problem
\begin{equation*}
\begin{gathered}
\mathcal{L}[x]=\left(\int_{a}^{b}f_{1}\langle \tilde{x}
\rangle_1(t)(dt)^{\alpha_1}\right)\left(\int_{a}^{b}f_{2}\langle
\tilde{x}\rangle_2(t)(dt)^{\alpha_2}\right) \longrightarrow \textrm{extr},\\
(x(a)=x_{a}) \quad (x(b)=x_{b})
\end{gathered}
\end{equation*}
then the Euler--Lagrange equation
\begin{equation*}
\alpha_1\mathcal{F}_{2}[\tilde{x}](b-t)^{\alpha_1-1}\left(f_{1y}\langle
\tilde{x} \rangle_1(t)- f_{1v}^{(\alpha_1)}\langle \tilde{x}
\rangle_1(t)\right)
+\alpha_2\mathcal{F}_{1}[\tilde{x}](b-t)^{\alpha_2-1}\left(f_{2y}\langle
\tilde{x} \rangle_2(t)- f_{2v}^{(\alpha_2)}\langle \tilde{x}
\rangle_2(t)\right)=0
\end{equation*}
holds for all $t \in [a,b)$. Moreover, if $x(a)$ is not specified, then
\begin{equation*}
\alpha_1!\mathcal{F}_{2}[\tilde{x}]f_{1v}\langle \tilde{x}
\rangle_1(a) +\alpha_2!\mathcal{F}_{1}[\tilde{x}]f_{2v}\langle
\tilde{x} \rangle_2(a)=0;
\end{equation*}
if $x(b)$ is not specified, then
\begin{equation*}
\alpha_1!\mathcal{F}_{2}[\tilde{x}]f_{1v}\langle \tilde{x}
\rangle_1(b) +\alpha_2!\mathcal{F}_{1}[\tilde{x}]f_{2v}\langle
\tilde{x} \rangle_2(b)=0.
\end{equation*}
\end{corollary}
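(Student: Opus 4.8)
The plan is to recognize the product functional as a special instance of the general composition problem \eqref{vp} and then invoke Theorem~\ref{thm:mr} directly. The functional $\mathcal{L}[x]$ in this corollary has exactly the form \eqref{vp} with $n=2$ and with the outer function chosen to be the product map
\begin{equation*}
H(u_1,u_2)=u_1 u_2.
\end{equation*}
The first step is therefore to verify that this $H$ satisfies hypothesis~(i) of Theorem~\ref{thm:mr}. This is immediate: $H$ is a polynomial in $(u_1,u_2)$ and hence has continuous partial derivatives of every order. Hypotheses (ii) and (iii) carry over unchanged from the data $f_1,f_2$ of the corollary, so the main theorem applies.

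Next I would compute the partial derivatives of $H$ that enter the conclusions of Theorem~\ref{thm:mr}. By direct differentiation,
\begin{equation*}
H'_1(u_1,u_2)=u_2, \qquad H'_2(u_1,u_2)=u_1.
\end{equation*}
Evaluating at the point $(u_1,u_2)=(\mathcal{F}_1[\tilde{x}],\mathcal{F}_2[\tilde{x}])$ yields the key simplification $H'_1(\mathcal{F}_1[\tilde{x}],\mathcal{F}_2[\tilde{x}])=\mathcal{F}_2[\tilde{x}]$ and $H'_2(\mathcal{F}_1[\tilde{x}],\mathcal{F}_2[\tilde{x}])=\mathcal{F}_1[\tilde{x}]$; that is, the outer derivative with respect to the $i$th slot returns the value of the \emph{other} fractional integral.

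The final step is to substitute these two identities into the Euler--Lagrange equation and the two natural boundary conditions furnished by Theorem~\ref{thm:mr} for $n=2$. Writing out the sum $\sum_{i=1}^{2}$ term by term and replacing each $H'_i$ by the corresponding $\mathcal{F}_j[\tilde{x}]$ reproduces verbatim the three displayed equations in the statement. I do not anticipate any genuine obstacle, since the argument is a pure specialization of the main theorem; the only point requiring care is the bookkeeping of indices—one must consistently pair the factor $\mathcal{F}_2[\tilde{x}]$ with the $f_1$-terms and $\mathcal{F}_1[\tilde{x}]$ with the $f_2$-terms, which is precisely the cross structure produced by differentiating a product.
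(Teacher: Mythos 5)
Your proposal is correct and coincides with the paper's own (implicit) derivation: Corollary~\ref{cproduct} is obtained there exactly by specializing Theorem~\ref{thm:mr} to $n=2$ and $H(u_1,u_2)=u_1u_2$, so that $H'_1=\mathcal{F}_2[\tilde{x}]$ and $H'_2=\mathcal{F}_1[\tilde{x}]$ produce the stated cross structure. Your verification of hypothesis~(i) and the index bookkeeping are exactly what is needed; nothing is missing.
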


\begin{remark}
In the case $\alpha_i\rightarrow 1$, $i=1,2$,
Corollary~\ref{cproduct} gives a result of \cite{CLP}: the
Euler--Lagrange equation associated with the product functional
\begin{equation*}
\mathcal{L}[x]=\left(\int_{a}^{b}f_{1}(t,x(t),x'(t))dt\right)\left(
\int_{a}^{b}f_{2}(t,x(t),x'(t))dt\right)
\end{equation*}
is
\begin{equation*}
\mathcal{F}_{2}[x]\left(f_{1y}(t,x(t),x'(t))-
\frac{d}{dt}f_{1v}(t,x(t),x'(t))\right)
+\mathcal{F}_{1}[x]\left(f_{2y}(t,x(t),x'(t))-
\frac{d}{dt}f_{2v}(t,x(t),x'(t))\right)=0
\end{equation*}
and the natural condition at $t=a$, when $x(a)$ is free, becomes
\begin{equation*}
\mathcal{F}_{2}[x]f_{1v}(a,x(a),x'(a))+\mathcal{F}_{1}[x]f_{2v}(a,x(a),x'(a))=0.
\end{equation*}
\end{remark}

\begin{corollary}
\label{cquotient}
If $\tilde{x}$ is a solution to problem
\begin{equation*}
\begin{gathered}
\mathcal{L}[x]=\frac{\int_{a}^{b}f_{1}\langle \tilde{x}
\rangle_1(t)(dt)^{\alpha_1}}{\int_{a}^{b}f_{2}\langle \tilde{x}
\rangle_2(t)(dt)^{\alpha_2}} \longrightarrow \textrm{extr},\\
(x(a)=x_{a}) \quad (x(b)=x_{b})
\end{gathered}
\end{equation*}
then the Euler--Lagrange equation
\begin{equation*}
\alpha_1(b-t)^{\alpha_1-1}\left(f_{1y}\langle \tilde{x}
\rangle_1(t)- f_{1v}^{(\alpha_1)}\langle \tilde{x}
\rangle_1(t)\right)
-\alpha_2Q(b-t)^{\alpha_2-1}\left(f_{2y}\langle \tilde{x}
\rangle_2(t)- f_{2v}^{(\alpha_2)}\langle \tilde{x}
\rangle_2(t)\right)=0
\end{equation*}
holds for all $t \in [a,b)$, where
$Q=\frac{\mathcal{F}_{1}[\tilde{x}]}{\mathcal{F}_{2}[\tilde{x}]}$.
Moreover, if $x(a)$ is not specified, then
$\alpha_1!f_{1v}\langle \tilde{x} \rangle_1(a)
-\alpha_2!Qf_{2v}\langle \tilde{x} \rangle_2(a)=0$;
if $x(b)$ is not specified, then
$\alpha_1!f_{1v}\langle \tilde{x} \rangle_1(b)
-\alpha_2!Qf_{2v}\langle \tilde{x} \rangle_2(b)=0$.
\end{corollary}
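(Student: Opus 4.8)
The plan is to recognize the quotient functional as a special case of the composition functional \eqref{vp} and then simply specialize Theorem~\ref{thm:mr}. Here the outer function is $H:\mathbb{R}^2\to\mathbb{R}$ given by $H(y_1,y_2)=y_1/y_2$, so that $n=2$ and
\[
\mathcal{L}[x]=H\!\left(\int_a^b f_1\langle x\rangle_1(t)(dt)^{\alpha_1},\;\int_a^b f_2\langle x\rangle_2(t)(dt)^{\alpha_2}\right)=\frac{\mathcal{F}_1[x]}{\mathcal{F}_2[x]}.
\]
The first thing I would do is record the partial derivatives of $H$, namely $H'_1(y_1,y_2)=1/y_2$ and $H'_2(y_1,y_2)=-y_1/y_2^2$. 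Evaluating these at the point $(\mathcal{F}_1[\tilde{x}],\mathcal{F}_2[\tilde{x}])$ gives $H'_1=1/\mathcal{F}_2[\tilde{x}]$ and $H'_2=-\mathcal{F}_1[\tilde{x}]/\mathcal{F}_2[\tilde{x}]^2$.

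Next I would substitute these two expressions directly into the Euler--Lagrange equation of Theorem~\ref{thm:mr}. Writing out the sum over $i=1,2$ produces
\[
\frac{\alpha_1}{\mathcal{F}_2[\tilde{x}]}(b-t)^{\alpha_1-1}\bigl(f_{1y}\langle\tilde{x}\rangle_1(t)-f_{1v}^{(\alpha_1)}\langle\tilde{x}\rangle_1(t)\bigr)
-\frac{\alpha_2\,\mathcal{F}_1[\tilde{x}]}{\mathcal{F}_2[\tilde{x}]^2}(b-t)^{\alpha_2-1}\bigl(f_{2y}\langle\tilde{x}\rangle_2(t)-f_{2v}^{(\alpha_2)}\langle\tilde{x}\rangle_2(t)\bigr)=0.
\]
Multiplying through by the (nonzero) factor $\mathcal{F}_2[\tilde{x}]$ clears the denominators and, recalling the abbreviation $Q=\mathcal{F}_1[\tilde{x}]/\mathcal{F}_2[\tilde{x}]$, yields exactly the stated Euler--Lagrange equation. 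The natural boundary conditions are handled identically: substituting $H'_1$ and $H'_2$ into \eqref{nat:l} and \eqref{nat:r} and again multiplying by $\mathcal{F}_2[\tilde{x}]$ converts the two-term sums into the asserted relations $\alpha_1!\,f_{1v}\langle\tilde{x}\rangle_1(a)-\alpha_2!\,Q\,f_{2v}\langle\tilde{x}\rangle_2(a)=0$ and its analogue at $t=b$.

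Since the entire argument is a direct substitution into an already-proved theorem, there is no genuinely hard analytic step. The one point requiring care is the nondegeneracy hypothesis $\mathcal{F}_2[\tilde{x}]\neq 0$: this is implicitly needed both for the quotient $\mathcal{L}[x]$ to be defined in a neighbourhood of $\tilde{x}$ and for $H$ to have continuous partial derivatives there, so that assumption~(i) preceding Theorem~\ref{thm:mr} is met. Under this standing assumption the multiplication by $\mathcal{F}_2[\tilde{x}]$ is legitimate and the result follows.
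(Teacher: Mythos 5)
Your proposal is correct and matches the paper's approach exactly: the corollary is obtained by specializing Theorem~\ref{thm:mr} to $H(y_1,y_2)=y_1/y_2$, substituting $H'_1=1/\mathcal{F}_2[\tilde{x}]$ and $H'_2=-\mathcal{F}_1[\tilde{x}]/\mathcal{F}_2[\tilde{x}]^2$, and multiplying through by $\mathcal{F}_2[\tilde{x}]\neq 0$ to introduce $Q$. Your explicit remark on the nondegeneracy condition $\mathcal{F}_2[\tilde{x}]\neq 0$ is a welcome point of care that the paper leaves implicit.
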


\begin{remark}
In the case $\alpha_i\rightarrow 1$, $i=1,2$,
Corollary~\ref{cquotient} gives the following result of \cite{CLP}:
the Euler--Lagrange equation associated with the quotient functional
\begin{equation*}
\mathcal{L}[x]
=\frac{\int_{a}^{b}f_{1}(t,x(t),x'(t))dt}{\int_{a}^{b}f_{2}(t,x(t),x'(t))dt}
\end{equation*}
is
\begin{equation*}
f_{1y}(t,x(t),x'(t))-Qf_{2y}(t,x(t),x'(t))
-\frac{d}{dt}\left[(f_{1v}(t,x(t),x'(t))
-Qf_{2v}(t,x(t),x'(t))\right]=0
\end{equation*}
and the natural condition at $t=a$, when $x(a)$ is free, becomes
\begin{equation*}
f_{1v}(a,x(a),x'(a))- Qf_{2v}(a,x(a),x'(a))=0.
\end{equation*}
\end{remark}

%--------------------------------------------------------------------

\section{An Example}
\label{sec:ex}

Consider the problem
\begin{equation}\label{ex:product}
\begin{gathered}
\text{minimize} \quad
\mathcal{L}[x]=\left(\int_{0}^{1}(x^{(\frac{1}{2})}(t))^2(dt)^{\frac{1}{2}}\right)
\left(\int_{0}^{1}t^{\frac{1}{2}}x^{(\frac{1}{2})}(t)(dt)^{\frac{1}{2}}\right)\\
x(0)=0, \quad x(1)=1.
\end{gathered}
\end{equation}
If $\tilde{x}$ is a local minimizer to \eqref{ex:product}, then the
fractional Euler--Lagrange equation must hold, \textrm{i.e.},
\begin{equation*}
\frac{1}{2}Q_{2}(1-t)^{-\frac{1}{2}}2(\tilde{x}^{(\frac{1}{2})}(t))^{(\frac{1}{2})}
+\frac{1}{2}Q_{1}(1-t)^{-\frac{1}{2}}(t^{\frac{1}{2}})^{(\frac{1}{2})}=0,
\end{equation*}
where
\begin{equation*}
Q_{1}=\int_{0}^{1}(\tilde{x}^{(\frac{1}{2})}(t))^2(dt)^{\frac{1}{2}},
\quad Q_{2}=\int_{0}^{1}t^{\frac{1}{2}}\tilde{x}^{(\frac{1}{2})}(t)
(dt)^{\frac{1}{2}}.
\end{equation*}
Hence,
\begin{equation}
\label{ex:product:euler}
Q_{2}2(\tilde{x}^{(\frac{1}{2})}(t))^{(\frac{1}{2})}+Q_{1}\frac{\sqrt{\pi}}{2}=0.
\end{equation}
If $Q_{2}= 0$, then also $Q_{1}=0$. This contradicts the fact that
a global minimizer to the problem
\begin{equation*}
\begin{gathered}
\text{minimize} \quad
\mathcal{F}_{1}[x]=\int_{0}^{1}(x^{(\frac{1}{2})}(t))^2(dt)^{\frac{1}{2}}\\
x(0)=0, \quad x(1)=1
\end{gathered}
\end{equation*}
is $\bar{x}(t)=t^{\frac{1}{2}}$ and
$\mathcal{F}_{1}[\bar{x}]=(\frac{\sqrt{\pi}}{2})^2$.
This can be easily shown by the results obtained in \cite{Almeida}.
Hence, $Q_{2}\neq 0$ and \eqref{ex:product:euler} implies that
candidate solutions to problem \eqref{ex:product} are those
satisfying the fractional differential equation
\begin{equation}
\label{euler}
(\tilde{x}^{(\frac{1}{2})}(t))^{(\frac{1}{2})}=-\frac{Q_1\sqrt{\pi}}{4Q_2}
\end{equation}
subject to the boundary conditions $x(0)=0$ and $x(1)=1$.
Solving equation \eqref{euler} we obtain
\begin{equation}
\label{sol:P}
x(t)=\frac{1}{\sqrt{\pi}}\int_{0}^{t}\left(\frac{Q_1\pi+4\sqrt{\pi}Q_2}{8Q_2}
-\frac{Q_1}{2Q_2}\tau^{\frac{1}{2}}\right)(t-\tau)^{-\frac{1}{2}}d\tau.
\end{equation}
Substituting \eqref{sol:P} into functionals $\mathcal{F}_1$ and
$\mathcal{F}_2$ gives
\begin{equation}
\label{equation:Q1,Q2}
\begin{cases}
-{\frac {1}{192}}\,{\frac {-32\,{Q_1}^{2}-48\,\pi
\,{Q_2}^{2}+3\,{Q_1}^{2}{ \pi }^{2}}{{Q_2}^{2}}}=Q_1\\
{\frac {1}{96}}\,{\frac {-32\,Q_1+3\,Q_1{\pi }^{2}+12\,{\pi}^{3/2}Q_2}{Q_2}}=Q_2.
\end{cases}
\end{equation}
We obtain the candidate minimizer to problem \eqref{ex:product}
solving the system of equations \eqref{equation:Q1,Q2}:
\begin{equation*}
\tilde{x}(t)=\frac{1}{\sqrt{\pi}}\int_{0}^{t}\left(\frac{Q_1\pi
+4\sqrt{\pi}Q_2}{8Q_2}-\frac{Q_1}{2Q_2}\tau^{\frac{1}{2}}\right)(t-\tau)^{-\frac{1}{2}}d\tau,
\end{equation*}
where
$$
Q_1=\frac{4}{3}\frac{\pi(\sqrt{\pi}(\frac{1}{4}\pi^{\frac{3}{2}}
+\frac{1}{4}\sqrt{\pi^3-8\pi})-4)}{-32+3\pi^2} \  \text{ and } \
Q_2=\frac{1}{12}\pi^{\frac{3}{2}}+\frac{1}{12}\sqrt{\pi^3-8\pi}.
$$

%--------------------------------------------------

\vskip 15 pt

\leftline{\bf\ Acknowledgments}

\vskip 10 pt

Work supported by the project
\emph{New Explorations in Control Theory Through Advanced Research} (NECTAR)
cofinanced by \emph{Funda\c{c}\~{a}o para a Ci\^{e}ncia e a Tecnologia} (FCT), Portugal,
and the \emph{Centre National de la Recherche Scientifique et Technique} (CNRST), Morocco.
A.B. Malinowska is currently a senior researcher at the University of Aveiro, Portugal,
under the support of BUT, via a project of the Polish Ministry of Science and Higher Education
\emph{Wsparcie Miedzynarodowej Mobilnosci Naukowcow}.

\newpage

%--------------------------------------------------

%--------------------------------------------------

\end{document}